\newtheorem{theorem}{Theorem}[section]
\newtheorem{lemma}[theorem]{Lemma}
\newtheorem{corollary}[theorem]{Corollary}
\theoremstyle{definition}
\newtheorem{remark}[theorem]{Remark}
\newtheorem{definition}[theorem]{Definition}
\newtheorem{example}[theorem]{Example}
\theoremstyle{remark}
\numberwithin{equation}{section}
\begin{document}
	\begin{center}
		{\large\textbf{Relation-Theoretic Banach Contraction Principle in Topological Spaces with an Application}}
	\end{center}
	\bigskip
	\begin{center}
		\textbf{Md Hasanuzzaman$^{1}$, Abhishikta Das$^{2}$, Sumit Som$^{3}$}
	\end{center}
	\begin{center}
		
		$^{1}$ Department of Mathematics, Thapar Institute of Engineering and Technology, Patiala Punjab, India\\
        $^{2}$ Department of Mathematics, Siksha-Bhavana, Visva-Bharati, Santiniketan, Birbhum, West Bengal, India \\
		$^{3}$ Department of Mathematics, Dinabandhu Mahavidyalaya, Bongaon, Pin-743235, West Bengal, India\\
		$^{1}$ md.hasanuzzaman1@gmail.com, $^{2}$ abhishikta.math@gmail.com, $^{3}$ somkakdwip@gmail.com
		\bigskip
		
		\textbf{Abstract}
	\end{center}
In this article, we extend several relation-theoretic notions to topological spaces. We introduce relation preserving contraction mapping into topological spaces and utilize the same to extend Banach contraction principle in  topological spaces employing a binary relation. 
 To illustrate the validity of our main result, we provide a concrete example along with a MATLAB-based visualization of the convergence behavior.
Furthermore, we demonstrated the applicability of our main result by finding a solution for a fractional differential equation under some suitable assumptions.\\

\noindent\textbf{Key Words:}  Binary relation, fixed point, topological space, relation-theoretic Banach contraction, fractional differential equation.\\
\noindent\textbf{Mathematics Subject Classification(2020):} 47H10; 54H25

\section{Introduction}

Metric fixed point theory is a cornerstone of nonlinear analysis having profound applications in diverse fields. Banach's contraction principle(abbreviated as BCP) \cite{banach1922operations} is a fundamental result in this theory that provides criterion for the existence as well as uniqueness of fixed point for self mappings. The strength of BCP lies in its wide applications which fall in several domain, namely: Differential equation, Integral equation, Economics, fractal theory, aquatic problem, market equilibrium, etc. which leads us to consider the BCP serves as a quintessential example of classical results encompassing all existing fixed point theorems.
Over the years, many generalizations and extensions of this principle have been developed by \\ 
 $\bullet$ relaxing contractive condition of involved map \\
$\bullet$ the number of mappings involved \\
$\bullet$ expanding the underlying space such as b-metric spaces, partial metric spaces, topological spaces $etc.$.  

The study of fixed point results in the setting of topological spaces is of significant interest. It allows the exploration of mappings where metric structures naturally may not exist. In such generalized structures, we need to define contraction mappings, study convergence properties and extend the classical fixed-point results into broader and more general frameworks. Due to this, several auxiliary tools such as: binary relations and continuous functions, have been used to study several important properties of the underlying space. 

In 2015, Alam and Imdad \cite{alam2015relation} introduced the relation-theoretic variant of BCP that unifies several results such as transitive relation due to Turinici \cite{turinici1986fixed}, order-theoretic relation by Ran and Reurings \cite{ran2004fixed}, Nieto and Rodr{\'i}guez-L{\'o}pez \cite{Nieto}, and several others. In this regrad, the technical details are available in Alam and Imdad \cite{alam2015relation} and Alam et al. \cite{alam2022relation}. 

On the other side, Som et al. \cite{S.Som} introduced the notion of topologically BCP  on a    topological space $ \Omega $   and studied the existence of fixed points of such mapping.   This generalization replaced the metric with a continuous function $  h : \Omega \times \Omega \to \mathbb{R} $ that fulfills certain specific conditions.     The interplay between the function $ g $  and the topology of the space allows for the development of novel results applicable to mappings beyond the traditional  framework of metric space. 

 Being inspired by  these foundations, in this article we utilize the concept of  binary relation $ \mathcal{R} $  into the topological structure of the space to establish   fixed-point results that   extend, sharpened versions of some known results of the existing literature. This framework generalizes contraction mappings by defining `topologically $  \mathcal{R} $-preserving contraction' that involves   the relational structure $  \mathcal{R} $.    
 With the interplay of    the relation $   \mathcal{R}  $,    a continuous    function $g$   and the topology of the space,  we establish sufficient conditions to guarantee the existence and uniqueness of fixed points.
  Furthermore,  there are scenarios where the BCP in topological space by S. Som et al. \cite{S.Som} may fail to guarantee the existence of a fixed point. The presence of additional relational structures on the underlying topological space  can make the existence theorem more efficient.   Our article  provides a  counterexample where the classical or topological Banach contraction principle is not applicable.  
  
 The paper is organized as follows. In Section 3, we begin with the formal definition of topologically $  \mathcal{R} $-preserving contractions    followed by the statement and proof of the main result. Example are provide to  highlight the significance of our result. This example   demonstrates that Theorem 2.9 of \cite{S.Som} fails to guarantee a fixed point for a self mapping wherein  utilizing a suitable binary relation $ \mathcal{R}$,  our newly introduced theorem ensures  the existence of a fixed point. The validity of our main result is demonstrated through a concrete example and an effective visualization of the convergence is presented using MATLAB.
 In Section 4, an   application is presented to illustrate the utility and its potential in mathematical analysis of the proposed result in the considered framework.

\section{Relation-theoretic notions}

We wish to recall the following terminological and notational conventions to make our paper possibly self-sustained. In what follows, $\mathbb{N}$, $\mathbb{Q}$  and $\mathbb{R}$ stands for the sets of natural, rational    and real numbers  and  $\mathbb{N}_{0}=\mathbb{N}\cup\{0\}$.\\
\noindent In this continuation, we also summarize some basic   definitions, concepts and relevant auxiliary results as described below:

 A binary relation $\mathcal{R}$ on a non-empty set $ \Omega $ is defined as an arbitrary subset  of $\Omega \times \Omega $. 
 %Trivially, the terms \textquotedblleft empty relation and \textquotedblleft universal relation" on $\Omega $ are respectively \textquotedblleft$\emptyset$" and \textquotedblleft$\Omega \times \Omega $". 
 From now on by $\mathcal{R}$, we denote a non-empty binary relation. If $( r, s ) \in \mathcal{R}$ and $ ( s, w ) \in \mathcal{R}$ imply $ ( r, w ) \in \mathcal{R},$ for all $ r, s, w \in \Omega $ then $\mathcal{R}$ is said to be transitive relation on $\Omega $. Furthermore, if $T$ is a self mapping on $ \Omega,$ then $\mathcal{R}$ is said to be $T$-transitive if it is transitive on $ T( \Omega )$.

\begin{definition}\cite{alam2015relation}
	For a binary relation $\mathcal{R}$    on $ \Omega $  
	\begin{enumerate}[(i)]
		\item  inverse relation $\mathcal{R}^{-1}:=\lbrace ( r, s )\in  \Omega^{2} : ( s, r ) \in \mathcal{R} \rbrace $ and  symmetric closure
		$\mathcal{R}^{s} : = \mathcal{R} \cup \mathcal{R}^{-1} $,
		\item $ r $ and $ s $ are $\mathcal{R}$-comparative   if either $( r, s )\in \mathcal{R}$ or $( s, r )\in \mathcal{R}$. It is denoted by $ [ r, s ]  \in \mathcal{R}.$
		\item if $( r, s ) \in \mathcal{R}^{s} \Longleftrightarrow [ r, s ] \in \mathcal{R}$.
		\item  a sequence $\lbrace { r_{n}}\rbrace \subset \Omega $ is called   $\mathcal{R}$-preserving  if
		 $$( r_{n}, r_{n+1})\in \mathcal{R} \hspace{0.5cm} \forall ~~n\in \mathbb{N}_{0}.$$ 
	\end{enumerate}
\end{definition}

\begin{definition} \cite{alam2015relation} \label{fclosed}
	For a  a self-mapping $T$ on nonempty set $ \Omega $, any binary relation $\mathcal{R}$ on $\Omega $ is said to be $T$-closed if for all $ r, s \in \Omega $,
	\begin{center}
		$( r, s ) \in \mathcal{R} \implies (T r, T s) \in \mathcal{R}.$
	\end{center}
\end{definition}

\begin{definition}\cite{kolman2000discrete}
	For $ r, s \in \Omega $, a path from $ r$ to $ s $ having length $n$, $n\in\mathbb{N}$    is a finite sequence  $\{r_0, r_1, r_2, \ldots, r_n\}\subseteq \Omega $ such that $ r_0 = r,\ r_n = s $ with $ ( r_i, r_{i+1} ) \in\mathcal{R},$ for each $i\in\{0,1,\ldots,n-1\}$.
\end{definition}
	
It is worth mentioning here that a path of length $n$ involves $n+1$ elements of $ \Omega $ (not necessarily distinct). 

\begin{definition}\cite{alam0000relationfilomat} 
	A subset $D\subseteq \Omega $ is called $\mathcal{R}$-connected  if, for every $ r, s \in D $, there exists a path in $\mathcal{R}$  connecting $ r $ to $ s $. 
\end{definition}

As we are intended to  use the concept of binary relation $ \mathcal{R} $ into
the topological structure of the space to establish fixed-point results,   the following  concepts of g-convergence and g-completeness by Som et al. \cite{S.Som} are necessary to recall.

\begin{definition}\cite{S.Som}
     Let $ \Omega $ be a topological space,  $ \{ \mu_n\} \subseteq  \Omega $   and $ g : \Omega \times \Omega \to \mathbb{R} $ be a continuous function.   Then 
     \begin{enumerate} [(i)]
         \item $ \{ \mu_n\} $  is called $  g $-convergent to some  $ \mu \in \Omega $ if $ \underset{n\to \infty}{\lim} ~ | g ( \mu_n, \mu ) | = 0 $.
        \item $ \{\mu_n\} $  is said to be $  g $-Cauchy if $ \underset{m, n\to \infty}{\lim} ~ | g ( \mu_n, \mu_m ) | = 0 $.
        \item if every $  g $-Cauchy sequence   in  $ \Omega $  is $  g $-convergent to some  point  in  $ \Omega $ then $ \Omega $ is said to be $  g $-complete.
     \end{enumerate}
\end{definition}
 
\begin{lemma} \label{uniqueness of limit} \cite{S.Som}
	Let $ \Omega $ be a topological space and  $ g: \Omega \times \Omega \to\mathbb{R} $ be a continuous function     satisfying $ g ( r, s) = 0 \implies r = s $ and $ | g(r, s) | \leq | g(r, t) |+ | g(t, s) | $  for all $  r, s, t \in \Omega $.  Then the limit of a $ g $-convergent sequence is unique.
\end{lemma}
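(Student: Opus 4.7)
The plan is to show $\mu = \mu'$ whenever a $g$-convergent sequence $\{\mu_n\}$ has two $g$-limits $\mu$ and $\mu'$, by deducing $g(\mu, \mu') = 0$ and then invoking the hypothesis $g(r,s) = 0 \Rightarrow r = s$.

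First I would assume $\{\mu_n\}$ is $g$-convergent to both $\mu$ and $\mu'$ in $\Omega$, so that by definition
\[
\lim_{n\to\infty}|g(\mu_n,\mu)| = 0 \quad \text{and} \quad \lim_{n\to\infty}|g(\mu_n,\mu')| = 0.
\]
Then I would apply the triangle-type inequality $|g(r,s)| \le |g(r,t)|+|g(t,s)|$ with the insertion point $t=\mu_n$ between $r=\mu$ and $s=\mu'$, to obtain
\[
|g(\mu,\mu')| \le |g(\mu,\mu_n)| + |g(\mu_n,\mu')|.
\]
Passing $n\to\infty$ on both sides would force $|g(\mu,\mu')|=0$, and the hypothesis $g(r,s)=0 \Rightarrow r=s$ would yield $\mu=\mu'$.

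The main obstacle, and the step where the proof really turns, is handling the apparent asymmetry between $|g(\mu_n,\mu)|$ (which the definition controls) and $|g(\mu,\mu_n)|$ (which the above inequality needs). To close this gap I would first invoke the triangle-type inequality in the opposite direction, with $r=\mu$, $t=\mu_n$, and $s=\mu$ (or similar symmetric manipulations), to bound $|g(\mu,\mu_n)|$ by $|g(\mu,\mu')|+|g(\mu',\mu_n)|$ or analogous vanishing quantities, and if necessary combine this with the continuity of $g$ and the hypothesis on $g$ at the diagonal (noting that $g(\mu,\mu)$ must vanish by the implication $g(r,s)=0 \Rightarrow r=s$ together with the triangle inequality). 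Once this symmetry issue is resolved, both terms on the right of the inequality above tend to zero and the conclusion $|g(\mu,\mu')|=0$ follows, completing the argument. The remainder is routine.
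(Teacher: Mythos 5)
Your opening decomposition
\[
|g(\mu,\mu')| \le |g(\mu,\mu_n)| + |g(\mu_n,\mu')|
\]
is the standard route (and the one the cited source \cite{S.Som} follows), and you are right that the whole proof turns on the orientation of the first term: the definition of $g$-convergence controls $|g(\mu_n,\mu)|$, not $|g(\mu,\mu_n)|$. The problem is that none of your proposed repairs actually closes this gap. The bound $|g(\mu,\mu_n)| \le |g(\mu,\mu')| + |g(\mu',\mu_n)|$ is circular ($|g(\mu,\mu')|$ is exactly the quantity you are trying to kill) and its second term is again of the uncontrolled orientation. The claim that ``$g(\mu,\mu)$ must vanish'' reverses the hypothesis: the lemma assumes $g(r,s)=0 \Rightarrow r=s$, not $r=s \Rightarrow g(r,s)=0$, and nothing forces $g$ to vanish on the diagonal. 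Finally, continuity of $g$ is of no help here, because $g$-convergence is defined purely through $g$ and need not imply convergence in the topology of $\Omega$, so you cannot pass $\mu_n \to \mu$ inside $g$.

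In fact the gap is unfixable from the stated hypotheses alone: take $\Omega=\{a,b,c_1,c_2,\dots\}$ with the discrete topology and set $g(c_n,a)=g(c_n,b)=1/n$ while $g$ equals $1$ on every other pair (including the diagonal). Then $g$ is continuous, never vanishes (so $g(r,s)=0\Rightarrow r=s$ holds vacuously), and satisfies the triangle-type inequality, yet $\{c_n\}$ is $g$-convergent to both $a$ and $b$. The missing ingredient is the symmetry condition $|g(r,s)|=|g(s,r)|$, which is part of the standing assumptions in \cite{S.Som} and appears as (g2) in Theorem \ref{main thm} but was dropped from the lemma as quoted here. With symmetry, your first display finishes the proof in one line: $|g(\mu,\mu_n)|=|g(\mu_n,\mu)|\to 0$, hence $g(\mu,\mu')=0$ and $\mu=\mu'$. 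You should state and use that hypothesis explicitly rather than attempt to derive the needed estimate without it.
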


\begin{theorem} \cite{S.Som}  \label{thm of S.Som} 
Let $ \Omega $ be a  topological space. Consider a continuous function   $ g : \Omega \times \Omega \to \mathbb{R} $    that satisfies $ g(r, s) = 0 \implies r = s $, $ |g(r, s)| = |g( s, r )| $, and $ |g( r, s )| \leq |g(r, t )| + |g(t, s)| $ for all $ r, s, t \in \Omega $. If $ U : \Omega \to \Omega $ is a topologically Banach contraction mapping with respect to $ g $ and $ \Omega $ is $ g $-complete, then $ U $ has exactly one   fixed point and for any $ \eta_0 \in \Omega $, the sequence $ \{ \eta_{n+1} \} = \{ U(\eta_n) \}  $   converges to the  fixed point of $ U $.
\end{theorem}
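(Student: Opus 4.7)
The plan is to follow the classical Banach fixed-point proof line by line, with the metric replaced by the non-negative quantity $|g(\cdot,\cdot)|$, which under the given hypotheses is symmetric, separates points, and satisfies a triangle inequality. Fix $\eta_0\in\Omega$ arbitrarily and set $\eta_{n+1}=U\eta_n$. The topologically Banach contraction condition supplies a constant $k\in[0,1)$ with $|g(Ur,Us)|\le k\,|g(r,s)|$, so iterating yields $|g(\eta_n,\eta_{n+1})|\le k^n|g(\eta_0,\eta_1)|$.

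Next, I would show that $\{\eta_n\}$ is $g$-Cauchy. Applying the triangle inequality for $|g|$ repeatedly, for $m>n$ one obtains
\[
|g(\eta_n,\eta_m)|\le\sum_{i=n}^{m-1}|g(\eta_i,\eta_{i+1})|\le\frac{k^n}{1-k}\,|g(\eta_0,\eta_1)|,
\]
which tends to $0$ as a geometric tail. By $g$-completeness, there is some $\eta^*\in\Omega$ with $|g(\eta_n,\eta^*)|\to 0$.

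To verify that $\eta^*$ is fixed, I would estimate, for each $n$,
\[
|g(\eta^*,U\eta^*)|\le |g(\eta^*,\eta_{n+1})|+|g(U\eta_n,U\eta^*)|\le |g(\eta^*,\eta_{n+1})|+k\,|g(\eta_n,\eta^*)|,
\]
and let $n\to\infty$; both terms vanish, so $|g(\eta^*,U\eta^*)|=0$, whence $U\eta^*=\eta^*$ by the separation hypothesis $g(r,s)=0\Rightarrow r=s$. Uniqueness follows immediately: if $\xi^*$ is another fixed point then $|g(\eta^*,\xi^*)|=|g(U\eta^*,U\xi^*)|\le k\,|g(\eta^*,\xi^*)|$, and since $k<1$ this forces $|g(\eta^*,\xi^*)|=0$, hence $\eta^*=\xi^*$.

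I expect no serious obstacle, since the conditions on $g$ are tailored precisely to let the standard Banach iteration argument go through. The only mild subtlety is that $g$-convergence is not a priori topological convergence, so in the fixed-point verification one must propagate limits through the triangle inequality rather than by invoking continuity of $g$ directly; Lemma \ref{uniqueness of limit} guarantees that the candidate limit $\eta^*$ is unambiguously defined, although uniqueness of the fixed point itself follows independently from the contraction inequality.
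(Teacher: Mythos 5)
Your proposal is correct: the paper itself states this theorem only as a cited result from \cite{S.Som} without reproducing its proof, but your argument is precisely the standard Banach iteration adapted to $|g(\cdot,\cdot)|$, which is the same strategy the paper uses to prove its generalization (Theorem \ref{main thm}, specialized to the universal relation, together with the direct contraction argument for uniqueness). No gaps; the symmetry hypothesis $|g(r,s)|=|g(s,r)|$ and the triangle inequality are exactly what you need to run the Cauchy estimate and the fixed-point verification, and your uniqueness step is even slightly more elementary than the path-connectedness argument of Theorem \ref{m2}.
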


In this continuation, we introduce the following definitions   that extend the notions of convergence of sequence, continuity and completeness  in the context of topological spaces equipped with a binary relation and a continuous function.  

\begin{definition}  
Consider a topological space $  \Omega $ endowed with a binary relation $\mathcal{R}$ and  $g: \Omega \times \Omega \to\mathbb{R}$  be a continuous function. Then,
	\begin{enumerate}[(i)]
		\item $ S : \Omega \rightarrow \Omega $ is called $g$-$\mathcal{R}$-continuous at $ r \in \Omega $ if for any $\mathcal{R}$-preserving $g$-convergent sequence $\{ r_{n}\}$ that converges  to $ r $, we have $ S ( r_{n} ) $ is  $g$-convergent to $ S( r ).$ Furthermore, $S $ is said to be $g$-$\mathcal{R}$-continuous if  it satisfies $g$-$\mathcal{R}$-continuity at every point of $ \Omega $.  
		\item If for a $g$-$\mathcal{R}$-convergent sequence $\{ r_{n} \} $ that converges to  $r $, there exists a subsequence $\{ r_{n_{l}}\}$ of $\{r_{n}\}$ with $ ( r_{n_{l}}, r )  \in \mathcal{R}$ ~~for all ~~ $ l \in \mathbb{N}_{0}$, then $\mathcal{R}$ is said to be $g$-self-closed.
         \item  if every $\mathcal{R}$-preserving $g$-Cauchy sequence in $\Omega$ is $g$-convergent then  $ \Omega $ is said to be $g$-$\mathcal{R}$-complete. 
	\end{enumerate}
\end{definition}

%==========================================

\section{Main results}

\noindent We employ the following notations on a topological space $\Omega$ endowed with a binary relation $\mathcal{R}$ and $ S $ a self-mapping  on   $\Omega$:
\begin{enumerate}[(i)]
	\item   $\Omega(S;\mathcal{R}):=\{ u \in \Omega :( u, S u )\in \mathcal{R}\}$,
	\item   $\Upsilon ( u, v, \mathcal{R} ) $:  the class of all paths in $ \mathcal{R} $  from $  u ~  \text{to}~ v ,$
	\item   $ F( S ): \text{set of all fixed points}.$	
\end{enumerate}	

Throughout  the article   $ \Omega $ stands for a topological space. 
Now, we define the concept of topologically $\mathcal{R}$-preserving contraction mapping in   $ \Omega $ with respect to a special function $g$ as follows:

\begin{definition} 
	Suppose $ \Omega $ is endowed with a binary relation $\mathcal{R}$ and $g: \Omega \times \Omega \to\mathbb{R}$  be a continuous function. Then $ S : \Omega \to \Omega $ is said to be topologically $\mathcal{R}$-preserving contraction with respect to $g$ if there exists $\alpha\in ( 0, 1 ) $ such that
 \begin{equation*}
 	|g ( S \mu_1, S \mu_2 ) | \leq \alpha | g ( \mu_1, \mu_2 ) |
 \end{equation*} 
for all $ \mu_1, \mu_2 \in \Omega $ with $ ( \mu_1, \mu_2 ) \in \mathcal{R} $.
\end{definition}

Next, we present and demonstrate our main result.

\begin{theorem}\label{main thm}
	 Let $\mathcal{R}$ be a binary relation on   $ \Omega $.  Suppose $ g  $  is a real-valued continuous	function on $ \Omega \times \Omega $ satisfying 
	 \begin{enumerate}[(g1)]
	 	\item $ g ( r, u ) = 0 \implies r = u $ 
	 	\item $ | g ( r, u ) | = | g ( u, r ) | $ 
	 	\item $ |g( r, u )| \leq |g(r, t ) | + | g ( t, u ) | $
	 \end{enumerate}  
     
 for all $ r, u, t  \in \Omega $ such that $ ( r, u ) \in \mathcal{R} ~ \& ~ ( t, u ) \in \mathcal{R} $   and $ S : \Omega \to \Omega $ is a mapping satisfying the followings:  
	\begin{enumerate}[(i)]
		\item $ \Omega$ is $g$-$\mathcal{R}$-complete,
		\item $\mathcal{R}$ is $ S $-closed,
		\item $ \Omega ( S;\mathcal{R})$ is non-empty,
		\item either $ S$ is ``$g-\mathcal{R}$-continuous" or ``$\mathcal{R}$ is $g$-self-closed",
		\item $ S $ is topologically $\mathcal{R}$-preserving contraction with respect to $g$.\label{iii} 
	\end{enumerate} 
    
	Then $ F (  S ) \neq \Phi $. Moreover, for each $r_0\in \Omega ( S ;\mathcal{R})$, the Picard sequence $ \{ S^n(r_0) \} $   converges to a fixed point of $S$.     
\end{theorem}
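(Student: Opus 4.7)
The plan is to adapt the classical Banach contraction argument to this relational topological setting, closely paralleling the Alam--Imdad strategy. Fix $r_{0}\in\Omega(S;\mathcal{R})$ and define the Picard iterates $r_{n}=S^{n}(r_{0})$. Since $(r_{0},Sr_{0})\in\mathcal{R}$ and $\mathcal{R}$ is $S$-closed (hypothesis (ii)), induction gives $(r_{n},r_{n+1})\in\mathcal{R}$ for every $n\in\mathbb{N}_{0}$, so $\{r_{n}\}$ is $\mathcal{R}$-preserving. The topologically $\mathcal{R}$-preserving contraction hypothesis then applies at each step, yielding $|g(r_{n},r_{n+1})|\leq\alpha|g(r_{n-1},r_{n})|\leq\cdots\leq\alpha^{n}|g(r_{0},r_{1})|$. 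If $|g(r_{0},r_{1})|=0$, then axiom (g1) forces $r_{0}=Sr_{0}$ and we are done; otherwise we proceed assuming $|g(r_{0},r_{1})|>0$.

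Next I would establish that $\{r_{n}\}$ is $g$-Cauchy. For $m>n$, iterated use of the triangle inequality (g3) along the chain of consecutive $\mathcal{R}$-related pairs gives
\begin{equation*}
|g(r_{n},r_{m})|\leq\sum_{k=n}^{m-1}|g(r_{k},r_{k+1})|\leq\frac{\alpha^{n}}{1-\alpha}|g(r_{0},r_{1})|,
\end{equation*}
which tends to $0$ as $n,m\to\infty$. By $g$-$\mathcal{R}$-completeness (hypothesis (i)), there exists $r^{\ast}\in\Omega$ to which $\{r_{n}\}$ is $g$-convergent.

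It remains to show $Sr^{\ast}=r^{\ast}$, and the two alternatives in hypothesis (iv) must be treated separately. If $S$ is $g$-$\mathcal{R}$-continuous, then since $\{r_{n}\}$ is an $\mathcal{R}$-preserving $g$-convergent sequence with limit $r^{\ast}$, the image sequence $\{S(r_{n})\}=\{r_{n+1}\}$ is $g$-convergent to $Sr^{\ast}$; but $\{r_{n+1}\}$ also $g$-converges to $r^{\ast}$, so the uniqueness of $g$-limits (Lemma \ref{uniqueness of limit}) forces $Sr^{\ast}=r^{\ast}$. If instead $\mathcal{R}$ is $g$-self-closed, a subsequence $\{r_{n_{l}}\}$ with $(r_{n_{l}},r^{\ast})\in\mathcal{R}$ is available; applying the contraction to these related pairs gives $|g(r_{n_{l}+1},Sr^{\ast})|=|g(Sr_{n_{l}},Sr^{\ast})|\leq\alpha|g(r_{n_{l}},r^{\ast})|\to 0$, so $\{r_{n_{l}+1}\}$ $g$-converges to $Sr^{\ast}$ while simultaneously converging to $r^{\ast}$; uniqueness of $g$-limits again yields $Sr^{\ast}=r^{\ast}$.

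The main subtlety to check is the telescoping step in the Cauchy argument: axiom (g3) is stated only when the pairs $(r,u)$ and $(t,u)$ lie in $\mathcal{R}$, so each invocation of the triangle inequality on $|g(r_{n},r_{m})|$ with an intermediate Picard iterate needs to be legitimized. This is the only place where some form of transitivity (at least along the orbit) appears to be implicitly needed, and I would likewise double-check that Lemma \ref{uniqueness of limit} remains applicable under the relational scope of (g3) by verifying that the two $g$-limit candidates in each case arise as limits of $\mathcal{R}$-preserving sequences through $\mathcal{R}$-related intermediate points.
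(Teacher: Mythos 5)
Your proof follows the paper's argument essentially verbatim: Picard iteration from $r_0\in\Omega(S;\mathcal{R})$, $S$-closedness to obtain an $\mathcal{R}$-preserving orbit, geometric decay of $|g(r_n,r_{n+1})|$ via the contraction, telescoping with (g3) to get a $g$-Cauchy sequence, $g$-$\mathcal{R}$-completeness to produce the limit $r^{\ast}$, and the two alternatives of hypothesis (iv) each closed out by Lemma \ref{uniqueness of limit}. The subtlety you flag --- that (g3) is only postulated for triples with $(r,u)\in\mathcal{R}$ and $(t,u)\in\mathcal{R}$, so the telescoping step and the appeal to uniqueness of limits need the relevant pairs to be related --- is a genuine issue, but the paper's own proof applies (g3) and Lemma \ref{uniqueness of limit} at exactly those points without comment, so your proposal is at least as careful as the published argument.
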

\begin{proof}
	Choose $ r_0 \in \Omega ( S; \mathcal{R})$ arbitrarily and construct a sequence $ \{ r_n \} \subset \Omega $ by $$ r_{n+1}=S ( r_n)=\ldots= S ^{n+1}(r_0),\ \text{for all}\ n\in\mathbb{N}_0.$$ 
	
	Now, as $( r_0, S r_0)\in\mathcal{R}$, then due to the $S $-closedness of $\mathcal{R}$, we iteratively get 
	\begin{equation*}
		( S ^{n}(r_0), S^{n+1}(r_0))\in\mathcal{R}\ \text{for all}\ n\in\mathbb{N}_0.
	\end{equation*}
    
	i.e.,	
	\begin{equation}
	(r_n, r_{n+1})\in\mathcal{R}\ \text{for all}\ n\in\mathbb{N}_0.
	\end{equation}
    
	Thus, $ \{ r_n \}$ is $\mathcal{R}$-preserving sequence in $ \Omega $. Now, as $ S $ is topologically $\mathcal{R}$-preserving contraction with respect to $g$, we get	
	\begin{align*}
	  |g(r_n,r_{n+1})| =   |g( S r_{n-1}, S r_{n})| 
	   \leq    \alpha |g(r_{n-1},r_{n})| \leq
	   \ldots    
	   \leq  \alpha^n |g(r_{0},r_{1})|
	\end{align*}
    
	for all $n\in\mathbb{N}_0$. 
    
	Now, for all $m,n\in\mathbb{N}$ with $m<n$, we obtain
		\begin{align*}
		|g(r_m,r_n)| & \leq  |g(r_m,r_{m+1})|+|g(r_{m+1},r_{m+2})|+\ldots+|g(r_{n-1},r_{n})| \\
		& \leq   (\alpha^m+\alpha^{m+1}+\ldots+\alpha^{n-1}) \, |g(r_{0},r_{1})|\\
		& \leq  \alpha^m \, (1+\alpha+\ldots+\alpha^{n-m-1}) \, |g(r_{0},r_{1})|\\
		& \leq   \frac{\alpha^m}{1-\alpha} \,  |g(r_{0},r_{1})|\to 0 ~\quad  \text{as}~ ~ \, m,n\to +\infty.    
	\end{align*}
    
	This shows that the sequence $ \{ r_n \}$ is  $\mathcal{R}$-preserving $g$-Cauchy sequence in $ \Omega $. 
	Owing to the $g-\mathcal{R}$-complete of $ \Omega $, there exists $r^*\in  \Omega $ such that $ \{ r_n \} $ is $g-\mathcal{R}$-convergent to $r^*$.

	Now, we use assumption $(iv)$ to show that $r^*$ is a fixed point of $ S $. As $ \{ r_n \} $ is  $\mathcal{R}$-preserving sequence that is  $g$-$\mathcal{R}$-convergent to $r^*$, then $g-\mathcal{R}$-continuity of $ S$ yields $|g(Sr_n, Sr^*)|\to 0$ as $n\to+\infty.$ 
	Hence, $\{S(r_n)\}$ is  $g-\mathcal{R}$-convergent to $S(r^*)$. But $r_{n+1}= S (r_n)$, is  $g-\mathcal{R}$-convergent to $r^*$. 
	Then owing to the uniqueness of the limit (using Lemma \ref{uniqueness of limit}), we get $S(r^*)=r^*$ and  hence $ r ^* \in F (S ) $.   
	
	Otherwise, suppose that $\mathcal{R}$ is $g$-self-closed.  Again as $\{r_n\}$ is a $\mathcal{R}$-preserving sequence and is $g$-convergent to $x^*$. Then, there exists a subsequence $ \{ r_ {n_l} \} $ of $\{r_n\}$  with $ ( r_{n_l}, r^* ) \in\mathcal{R}$, for all $l\in\mathbb {N}_0$.
    
	On using   $(v)$, we obtain
	\begin{equation*}
	|g( r_{{n_l}+1}, S (r^*)| = |g( S (r_{n_l}), S (r^*))|\leq \alpha |g(r_{{n}_l}, r^*)|\to 0~ ~ \text{as}~ l\to+\infty. 
	\end{equation*}	
    
	Therefore, the sequence $ \{ r_{n_l} \}$ is $g$-$\mathcal{R}$-convergent to $ S (r^*)$. Again, owing to the uniqueness of the limit (using Lemma \ref{uniqueness of limit}), we get $ S (r^*)=r^*$ and hence $r^* \in F ( S ) $.  
\end{proof}
%
%=======================================================

\begin{theorem}\label{m2}
	In addition to the assumptions of Theorem \ref{main thm}, if $ S ( \Omega )$ is $g-\mathcal{R}^s$-connected then   $ F ( S )  $ contains atmost one   point.
\end{theorem}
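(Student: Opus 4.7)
The plan is to exploit the $\mathcal{R}^s$-connectedness of $S(\Omega)$ and propagate the contraction along a finite connecting path. Start by taking any two fixed points $r^*, u^* \in F(S)$; since $r^* = S(r^*)$ and $u^* = S(u^*)$ both lie in $S(\Omega)$, the hypothesis furnishes a path $r^* = z_0, z_1, \ldots, z_k = u^*$ with $[z_i, z_{i+1}] \in \mathcal{R}$ for every $i \in \{0, 1, \ldots, k-1\}$.

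Next I would iterate $S$ along this path. The $S$-closedness of $\mathcal{R}$ (condition (ii) of Theorem \ref{main thm}) readily extends to $S$-closedness of $\mathcal{R}^s$, so induction yields $[S^n z_i, S^n z_{i+1}] \in \mathcal{R}$ for every $n \in \mathbb{N}_0$ and every $i$. Combining the symmetry (g2) with the topologically $\mathcal{R}$-preserving contraction hypothesis, one obtains
\[
|g(S^n z_i, S^n z_{i+1})| \leq \alpha \, |g(S^{n-1} z_i, S^{n-1} z_{i+1})| \leq \cdots \leq \alpha^n |g(z_i, z_{i+1})|
\]
for every $i$ and $n$. Telescoping via the triangle inequality (g3) along the chain $S^n z_0, S^n z_1, \ldots, S^n z_k$, and using that $S^n r^* = r^*, \, S^n u^* = u^*$, gives
\[
|g(r^*, u^*)| = |g(S^n z_0, S^n z_k)| \leq \sum_{i=0}^{k-1} |g(S^n z_i, S^n z_{i+1})| \leq \alpha^n \sum_{i=0}^{k-1} |g(z_i, z_{i+1})|.
\]
Sending $n \to \infty$ forces $g(r^*, u^*) = 0$, and property (g1) closes the argument by giving $r^* = u^*$.

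The main point where I would be careful is the successive splits in (g3), since the stated triangle inequality is conditional on a relational compatibility of the triple of points involved. At each telescoping step one must verify that the endpoints and the splitting point satisfy the required $\mathcal{R}$-relation; this is supplied by the $S$-closedness of $\mathcal{R}$ propagating along the iterates of the connecting path, together with the symmetric closure $\mathcal{R}^s$ accommodating both orientations of the path. Beyond this bookkeeping, the argument is the standard relation-theoretic uniqueness scheme in the spirit of Alam-Imdad \cite{alam2015relation}, adapted to the conditional metric-like structure provided by $g$.
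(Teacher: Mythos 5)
Your argument is essentially identical to the paper's proof: both take two fixed points, connect them by a finite path in $\mathcal{R}^s$ inside $S(\Omega)$, propagate the relation along the iterates via $S$-closedness, telescope with (g3), and let the contraction factor $\alpha^n$ force $g(r^*,u^*)=0$. If anything, you are slightly more careful than the paper about using (g2) to handle both orientations of the path and about the conditional nature of the triangle inequality, but the route is the same.
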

\begin{proof}
	On the lines of the proof of Theorem \ref{main thm}, one can show that $F( S )$ is non-empty. Now, if $F( S )$ is singleton then the proof is obvious. Otherwise, let there exists two distinct elements $ r^*, s ^*\in F(S )$. 
    As $ S (\Omega )$ is $g-\mathcal{R}^s$-connected then there exists a finite path, say $\{p_0,p_1,p_2,\ldots,p_t \}$ from $ r^*$ to $ s^*$ in $ S ( \Omega )$ such that $ r^*=p_0, s^*=p_t $ with $ ( p_i, p_{i+1} ) \in \mathcal{R}$ and $|g(p_i,p_{i+1})|<+\infty$  for each $i\in\{0,1,2,\ldots, t-1\}$.
	Again  $ S $-closedness of $\mathcal{R}$   enable us to write $ ( S^np_i, S^np_{i+1} ) \in \mathcal{R} $ for each $i \in \{0,1,2,\ldots,t-1\}$ and $n\in\mathbb{N}_0.$ Now, 
	\begin{align*}
		|g( r^*, s^*)|=|g( S^np_0, S^np_t)| & \leq  \sum_{i=0}^{t-1} |g( S^np_i, S^np_{i+1})| \\
		& \leq   \alpha \sum_{i=0}^{t-1} |g( S^{n-1}p_i, S^{n-1}p_{i+1})|\\
		& \leq  \alpha^2 \sum_{i=0}^{t-1} |g( S^{n-2}p_i, S^{n-2}p_{i+1})| \\
		  & \vdots \\
		 & \leq   \alpha^n \sum_{i=0}^{t-1} |g(p_i,p_{i+1})| 	\to 0 ~ \quad  \text{as}~ \, n\to+\infty. 
	\end{align*}	
    
Therefore, $ r^*= s^*$ and hence $ F ( S )  $ is a singleton set. 
\end{proof}

Next, we present an example in support of our main result (Theorem \ref{main thm}). 	This example also demonstrates that Theorem \ref{thm of S.Som} fails to exhibit a fixed point for a particular mapping. However for a suitably chosen binary relation $ \mathcal{R} $, the same mapping shows a fixed point.

\begin{example}
 Consider $ \Omega = \mathbb{R}^2 $ with the usual topology and define a function $   g : \Omega \times \Omega \to \mathbb{R} $ by  $ g( ( a_1, v_1 ), ( a_2, v_2 ) ) =    v_1 - v_2 $   for all $ ( a_1, v_1 ), ( a_2, v_2 ) \in \Omega $.  
	Then   
	\begin{enumerate}[(i)]
		\item   $  g $  is continuous on $ \Omega \times \Omega $;
		\item $ | g( a, u )| = | g( u, a )| ~  \forall a, u \in \Omega $;
		\item $  |g( a, u )| \leq |g( a, w )| + |g( w, u )| ~ \forall a, u,   w \in \Omega $.
	\end{enumerate}
    
But the condition $ g ( a, u ) = 0 \implies a = u $ is violated because $ g( ( a_1, u_1 ), ( a_2, u_2 ) ) = u_1 - u_2  = 0 $ can hold whenever $  u_1 = u_2 $ and $ a_1 \neq a_2 $.  

	Now define a self-mapping $ S $ on $ \Omega $ by $ S (  u,  a ) = ( u, \frac{a}{4}) ~ \forall ( u, a ) \in \Omega $. Then 
	$$ | g ( S (( u_1, a_1)), S ( ( u_2, a_2 ) ) ) | = \frac{1}{4} | g( ( u_1, a_1 ), ( u_2, a_2 ) ) | < \frac{1}{2} | g ( ( u_1, a_1 ), ( u_2, a_2 ) ) |.  $$
	Therefore, $ S $ satisfies the topologically Banach contraction condition with respect to $g$ for the contraction constant $ \alpha = \frac{1}{2} $. 
    
	As $ g $ fails to satisfy all the  required properties for all elements of $ \Omega $, the existences of   the fixed point for $ S $ can not be guaranteed by   Theorem \ref{thm of S.Som}. 
    
	Now we define a binary Relation $ \mathcal{R} $ on $ \Omega $ as: $  ( ( a_1, u_1 ), ( a_2, u_2 )) \in    \mathcal{R} \iff  a_1 = a_2 $.
    
	Then $g$ satisfies all the properties of Theorem \ref{main thm} and 
	\begin{enumerate}[(i)]
		\item $  \mathcal{R} $ is  $ S $-closed, since if $ ( ( a_1, u_1 ), ( a_2, u_2 ) ) \in \mathcal{R} $ then $   ( S ( a_1, u_1 ), S ( a_2, u_2 ) ) \in \mathcal{R} $.
		\item $ \Omega $ is $ g-\mathcal{R} $-complete.
		\item For   $ (0, 1) \in \Omega $, $ ( ( 0, 1 ), S (0, 1) ) \in \mathcal{R} $ and hence  $ \Omega ( S; \mathcal{R}) \neq \emptyset $.  
		\item $ S $ is $  g-\mathcal{R} $-continuous.
		\item  $S $ satisfies the contraction condition of  Theorem \ref{main thm}  for $ \alpha  = \frac{1}{2} $.
	\end{enumerate}

	 Thus, the  relaxed conditions of the  Theorem  \ref{main thm} allow $ S $ to have a fixed point. Here which is $ ( 0, 0) $. 
\end{example}

To illustrate the superiority of our newly proposed results over the Theorem \ref{thm of S.Som}, we present an example where the mapping 
$S$ fails to satisfy the contraction condition   by Theorem \ref{thm of S.Som} but fulfills the $g$-$\mathcal{R}$-contraction condition of Theorem \ref{main thm}. Here, one may observe that  the involved binary relation $\mathcal{R}$ allow us for a more flexible contraction condition in the sense that the contraction condition merely satisfy for related elements (related under involved binary relation $\mathcal{R}$) rather than all the elements of the underlying space $\Omega$.

\begin{example}
 Consider $ \Omega = \mathbb{R}^2 $ with the usual topology and define a function $   g : \Omega \times \Omega \to \mathbb{R} $ by  $ g( ( u_1, a_1 ), ( u_2, a_2 )  ) =  | u_1 - u_2 | + | a_1 - a_2 |  ~  \forall ( u_1, a_1 ), ( u_2, a_2 )  \in \Omega $.  
	Then  the following holds:
	\begin{enumerate}[(i)]
		\item   $  g $  is continuous on $ \Omega \times \Omega $;
            \item  $ g ( u, a ) = 0 \implies u = a $;
		\item $ | g( u, a )| = | g( a, u  )| ~  \forall u, a \in \Omega $;
		\item $  |g( u, a )| \leq | g (  u, w )| + | g( w, a )| ~ \forall u, a, w \in \Omega $.
	\end{enumerate}
	Now define a self-mapping $ S $ on $ \Omega $ by $ S ( u, a  ) = ( \frac{u^2}{4}, \frac{ a }{4}) ~ \forall ( u, a ) \in \Omega $. Then we have
	$$ | g ( S ( ( u_1, a_1 ) ), S ( ( u_2, a_2 ) ) ) | = \left| g\left( ( \frac{ u_1^2}{4}, \frac{ a_1}{4}) , ( \frac{u_2 ^2}{4}, \frac{a_2}{4}) \right) \right| = \frac{1}{4} | u_1 ^2 - u_2 ^2 | + \frac{1}{4} |a_1 - a_2 |.
  $$
	The term $ | u_1 ^2 - u_2 ^2 | $ grows quadratically with $ u_1, u_2 \in \mathbb{R} $. So, $ S $ does not satisfy the topologically Banach contraction condition of Theorem \ref{thm of S.Som} with respect to $g$ for any    $ \alpha \in ( 0, 1 ) $.  Therefore,  the existences of   the fixed point for $ S $ can not be guaranteed by   Theorem \ref{thm of S.Som}.  
     
	Now we define a binary Relation $ \mathcal{R} $ on $ \Omega $ as: $  ( ( u_1, a_1 ), ( u_2, a_2 ) ) \in    \mathcal{R} \iff  u_1 = u_2 $.  
	Then   
	\begin{enumerate}[(i)]
		\item $  \mathcal{R} $ is  $ S $-closed, since if $ ( ( u_1, a_1 ), ( u_2, a_2 ) ) \in \mathcal{R} $ then $  ( S ( u_1, a_1 ), S ( u_2, a_2 ) ) \in \mathcal{R} $.
		\item $ \Omega $ is $ g-\mathcal{R} $-complete.
		\item For   $ (0, 1) \in \Omega $, $ ( ( 0, 1 ), S (0, 1) ) \in \mathcal{R} $ and hence  $ \Omega ( S; \mathcal{R}) \neq \emptyset $.  
		\item $ S $ is $  g-\mathcal{R} $-continuous.
	\end{enumerate}
	 
     Moreover  for $    ( ( a_1, y_1 ),  ( a_2, y_2 ) ) \in \mathcal{R} $,   $ S $ satisfies:
     $$ | g ( S  ( a_1, y_1  ), S (  a_2, y_2 )  ) | = \left| g\left( \left( \frac{a_1^2}{4}, \frac{y_1}{4} \right) , \left( \frac{a_2 ^2}{4}, \frac{y_2}{4} \right) \right) \right| =  \frac{1}{4} | y_1 - y_2 | < \frac{1}{2} | g ( ( a_1, y_1 ), ( a_2, y_2 ) ) |.  $$
     Therefore, the contraction condition of  Theorem \ref{main thm} holds for $ \alpha  = \frac{1}{2} $ and hence we can ensure the existence of a unique fixed point of $ S $  by Theorem  \ref{main thm}.
	Clearly,  here the fixed point is $ ( 0, 0) $.  This example shows the genuineness of our newly proved result over the corresponding related results.
	
	\begin{figure*}[ht!]
		\centering
	 	\includegraphics[width=10cm,height=7cm]{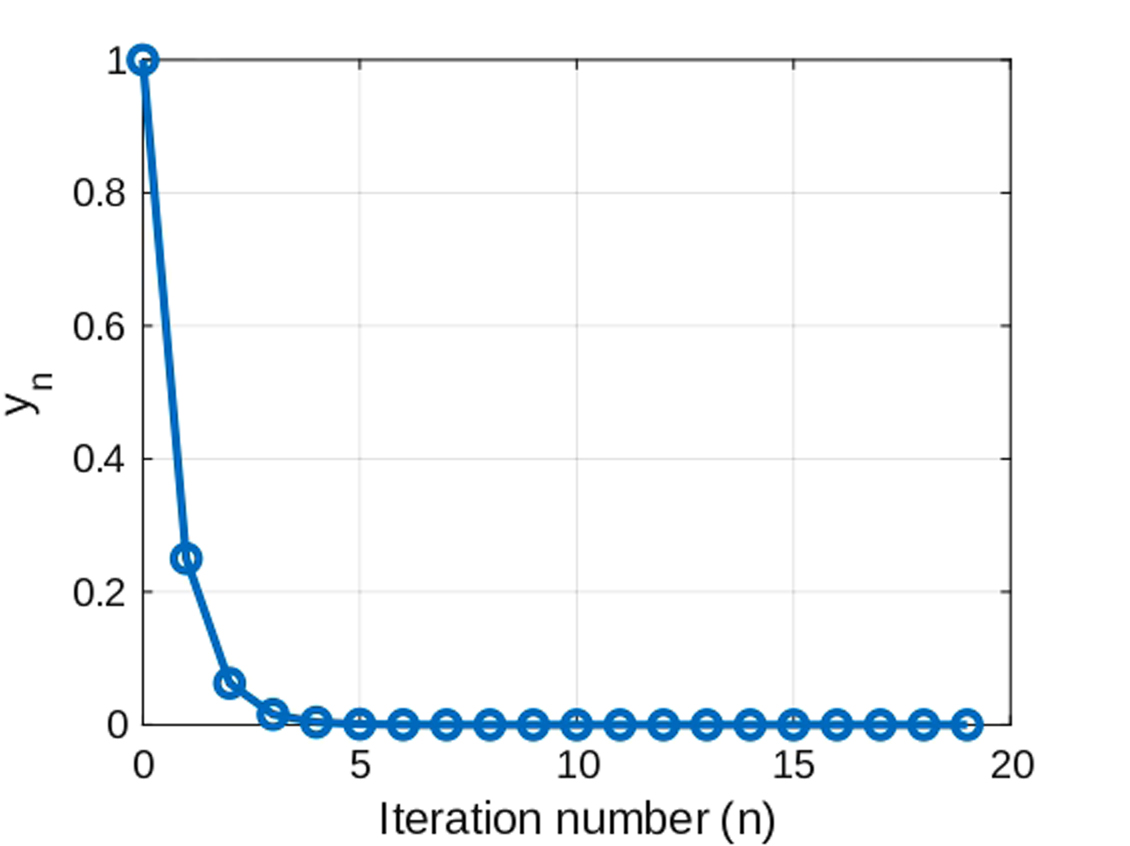}  
	\label{Fig}
%	(b) \includegraphics[width=6cm,height=6cm]{Figure4}
%	  \label{Fig-4}
		\caption{Convergence of $y$-coordinate under both the map $ T ( x, y ) = ( x, \frac{y}{4} )  $  \&  $ T ( x,   y )  = \left( \frac{x^2}{4}, \frac{y}{4}\right)  $    }
	\end{figure*}

The MATLAB generated graph (Figure \ref{Fig})  displays the evolution of the  $ y $-component,  $ \{ y_n \}  $ of the sequence $ \{ S^n ( x_0, y_0 ) \} = \{ (  0, y_n ) \} $    under both the mapping  $ S (x, y) = (x, \frac{y}{4} ) $ and $ S ( x,   y )  = \left( \frac{x^2}{4}, \frac{y}{4}\right) $, starting from an initial point $ ( 0, 1 ) $. The plot clearly demonstrates geometric convergence of the $ y $-coordinate to zero, which supports the analytical observation that $ S$ satisfies a contraction condition with respect to the function $ g $.    The simulation validates the existence of a fixed point $ (0, 0)   $   within the $g$-$\mathcal{R} $ framework, despite $ g $  not satisfying classical metric properties.  
\end{example}

%==============================================	

  In particular, by choosing the binary relation $ \mathcal{R}$ as the universal relation, our main theorem reduces to the framework of Theorem \ref{thm of S.Som} of Som et al. \cite{S.Som}. This is presented as the following corollary.

\begin{corollary} 
	Consider a binary relation $ \mathcal{R}$ defined on   $ \Omega $ and a continuous function $ g : \Omega \times \Omega \to \mathbb{R} $    satisfying the properties (g1)-(g3) of Theorem \ref{main thm} and $ S : \Omega \to \Omega $ be a    topologically $\mathcal{R}$-preserving contraction mapping with respect to $g$.  
	If $ \Omega $ is $g$-complete, then $ F ( S ) $ contains exactly one point. Moreover, for any $ r_0 \in \Omega $,   $ \{ S^n (r_0) \}  $ converges to the unique fixed point of $ S $.
\end{corollary}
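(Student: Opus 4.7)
The plan is to deduce the corollary directly from Theorem \ref{main thm} by verifying its hypotheses in the present setting, then to settle uniqueness by a short argument from the contraction condition itself.

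Because the corollary's conclusion must hold for \emph{every} starting point $r_0\in\Omega$ and for \emph{any} pair of fixed points, the relation $\mathcal{R}$ must be rich enough to carry each Picard orbit and to compare any two fixed points. The natural and intended reading — signaled in the paragraph preceding the corollary — is the universal-relation case $\mathcal{R}=\Omega\times\Omega$. Under this reading the auxiliary hypotheses of Theorem \ref{main thm} all become automatic: (ii) $\mathcal{R}$ is $S$-closed since $(Sr,Ss)\in\mathcal{R}$ for every $r,s\in\Omega$; (iii) $\Omega(S;\mathcal{R})=\Omega$, so the starting-point condition is vacuous; (i) every sequence is $\mathcal{R}$-preserving and so $g$-completeness of $\Omega$ coincides with $g$-$\mathcal{R}$-completeness; (iv) $\mathcal{R}$ is $g$-self-closed because any tail pair $(r_n,r)$ sits in $\mathcal{R}$; and the topologically $\mathcal{R}$-preserving contraction (v) now reads $|g(Sr,Ss)|\leq\alpha|g(r,s)|$ for all $r,s\in\Omega$.

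With Theorem \ref{main thm} fully applicable, I would invoke it to obtain both $F(S)\neq\emptyset$ and the convergence of $\{S^n(r_0)\}$ to a fixed point of $S$ for every $r_0\in\Omega$. Uniqueness is then a one-line consequence of the contraction: given $r^*,s^*\in F(S)$, since $(r^*,s^*)\in\mathcal{R}$ one has
\[
|g(r^*,s^*)|=|g(Sr^*,Ss^*)|\leq\alpha|g(r^*,s^*)|,
\]
which, together with $\alpha\in(0,1)$, forces $|g(r^*,s^*)|=0$, and hence $r^*=s^*$ by property (g1). Equivalently one could note that $S(\Omega)$ is trivially $\mathcal{R}^s$-connected in this setting and invoke Theorem \ref{m2}, but the direct contraction estimate is shorter.

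The argument involves no substantive estimate; the only real point is interpretive. A strictly literal reading of the hypotheses for an arbitrary $\mathcal{R}$ would leave the conclusion unreachable, because the contraction could be vacuous on the orbits and the hypotheses of Theorem \ref{main thm} beyond (v) are not explicitly listed. The universal-relation specialization — which is exactly what makes the corollary recover the topological Banach contraction principle of Theorem \ref{thm of S.Som}, as the authors announce just before the statement — is what aligns the hypotheses and closes the argument. I anticipate no significant technical obstacle beyond recording this reduction carefully.
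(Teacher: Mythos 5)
Your proposal is correct and takes essentially the same route as the paper: both specialize to the universal relation $\mathcal{R}=\Omega\times\Omega$ (as the paper's preceding remark signals), check that hypotheses (i)--(v) of Theorem \ref{main thm} become automatic in that case, and invoke the theorem for existence and convergence of the Picard iterates. The only divergence is in the uniqueness step, where the paper notes that $S(\Omega)$ is trivially $\mathcal{R}^s$-connected and cites Theorem \ref{m2}, while you give the shorter direct estimate $|g(r^*,s^*)|=|g(Sr^*,Ss^*)|\leq\alpha|g(r^*,s^*)|$ forcing $r^*=s^*$ via (g1) --- an equivalent argument you yourself flag as interchangeable with the Theorem \ref{m2} route.
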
  
\begin{proof}
	Since $ \mathcal{R} $ is a   universal relation on $  \Omega $, so  $	( u, a ) \in \mathcal{R} $  for all 	 $ u, a \in \Omega $. This implies  $ g ( u, a ) = 0 \implies u = a $,    $ | g ( u, a ) | = | g ( a, u ) | $ and 
	 $  | g ( u, a ) | \leq | g ( u, w ) | + | g ( w, a ) | $ 	holds for    $ u, a, w \in \Omega $ with $ ( u, a ) \in \mathcal{R} $ and $ ( a, w ) \in \mathcal{R} $. 
     
     Moreover, since $\mathcal{R} $ is universal, so $  \Omega ( S; \mathcal{R} ) = \Omega $   is non-empty. Also, under the universal relation $\mathcal{R}$,  $g$-$ \mathcal{R}$-completeness becomes $g$-completeness for the underlying space $\Omega$.  The conditions   $\mathcal{R} $ is  $S $-closed and    either  $ S $ is  $g$-$ \mathcal{R}$-continuous or  $\mathcal{R} $ is  
g-self-closed are trivially satisfied   for the universal relation $\mathcal{R} $. 
Since $ S $ satisfies $ 	|g ( S ( u ), S ( a ) )| \leq \beta |g ( u, a )|, ~ \text{for all} ~ u, a \in \Omega $ and for some $ \beta \in ( 0, 1 ) $  and $\mathcal{R} $ is universal relation, so $ S $ is topologically  $g$-$ \mathcal{R}$-preserving contraction mapping. 

As, $\mathcal{R} $  is the universal relation, so $ ( u, y ) \in \mathcal{R} $  for all 
$ u, y \in \Omega $ and therefore $ \mathcal{R}^s $ relates all points of $ \Omega $.  For any $ u, y \in S ( \Omega ) $,  directly we can take the sequence  $ \{ u, y \} $, and the condition  $  ( u, y ) \in \mathcal{R} $ is satisfied by the universal nature of $ \mathcal{R}$.  Hence, there is always a finite sequence connecting    $ S ( \Omega ) $  $ R^s $-connected. 
Hence from Theorem \ref{main thm} and \ref{m2}, the conclusion follows.
 \end{proof}

\begin{remark}
    The corollary stated above recovers Theorem \ref{thm of S.Som}   as a special case of our main theorem. By choosing $  \mathcal{R} $ as universal relation instead of  binary relation, the $g$-$ \mathcal{R}$-contraction condition coincides with the usual contraction condition of Theorem \ref{thm of S.Som}. This establishes that Theorem \ref{thm of S.Som} of Som et al. \cite{S.Som} is a direct consequence of our generalized result.
\end{remark}

%%%%%%%%%%%%%%

If we consider $g$ to be a metric $d$ on $\Omega$, our main theorem reduces to the framework of Theorem 3.1 of Alam and  Imdad \cite{alam2015relation}. In this context we present the following corollary.

\begin{corollary}
      Let  $\mathcal{R}$ be a binary relation on $ \Omega $ and $ d : \Omega \times \Omega \to \mathbb{R} $  be a   metric.    If $ S : \Omega \to \Omega $ is a mapping satisfying the followings:
	\begin{enumerate}[(i)]
		\item $ \Omega $ is $d-\mathcal{R}$-complete,
		\item $\mathcal{R}$ is $ S $-closed,
		\item $ \Omega ( S; \mathcal{R} ) $ is non-empty,
		\item either $ S $ is ``$d$-$\mathcal{R}$-continuous" or ``$\mathcal{R}$ is $d$-self-closed",
		\item $ S $ is topologically $\mathcal{R}$-preserving contraction with respect to $d$,  
	\end{enumerate} 
	then $ F ( S ) $ contains exactly one point. Moreover, for each $r_0\in \Omega ( S; \mathcal{R})$,   $ \{ S^n(r_0) \} $ converges to the fixed point of $S$. 
\end{corollary}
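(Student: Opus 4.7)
The plan is to observe that every metric $d$ automatically satisfies the three auxiliary-function properties (g1)--(g3) required in Theorem \ref{main thm}, so the corollary becomes a direct specialization of Theorem \ref{main thm} (combined with Theorem \ref{m2} for uniqueness).

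First I would verify continuity and the $g$-axioms for $d$. Any metric is continuous with respect to the topology it induces on $\Omega\times\Omega$, so the continuity hypothesis on $g$ is met. Property (g1), $d(r,u)=0\Rightarrow r=u$, is precisely the definiteness axiom of a metric; property (g2) follows from symmetry together with $d\geq 0$, yielding $|d(r,u)|=d(r,u)=d(u,r)=|d(u,r)|$; and property (g3) is exactly the triangle inequality, which holds unrestrictedly on $\Omega$ and therefore in particular on the related triples for which Theorem \ref{main thm} demands it.

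Next I would match hypotheses term-by-term. The conditions $d$-$\mathcal{R}$-completeness of $\Omega$, $S$-closedness of $\mathcal{R}$, non-emptiness of $\Omega(S;\mathcal{R})$, the dichotomy between $d$-$\mathcal{R}$-continuity of $S$ and $d$-self-closedness of $\mathcal{R}$, and the topologically $\mathcal{R}$-preserving contraction inequality coincide verbatim with hypotheses (i)--(v) of Theorem \ref{main thm} once $g$ is replaced by $d$. An invocation of Theorem \ref{main thm} then yields the existence of a fixed point and the $d$-convergence of the Picard iterates $\{S^n(r_0)\}$ from any $r_0\in\Omega(S;\mathcal{R})$.

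The main obstacle will be uniqueness. Theorem \ref{main thm} on its own only delivers existence, whereas the statement claims that $F(S)$ contains exactly one point; this compels an appeal to Theorem \ref{m2}, which requires $S(\Omega)$ to be $d$-$\mathcal{R}^s$-connected. Since this assumption is not listed explicitly in the corollary, I would either read it as an implicit requirement (paralleling the treatment in the preceding universal-relation corollary, where it was verified by the universality of $\mathcal{R}$) or supply the short direct argument: for any $r^*,s^*\in F(S)$ with $[r^*,s^*]\in\mathcal{R}$, the contraction condition gives $d(r^*,s^*)=d(Sr^*,Ss^*)\le\alpha\,d(r^*,s^*)$, forcing $r^*=s^*$. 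With either route, Theorem \ref{m2} completes the proof.
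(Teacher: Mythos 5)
Your proposal takes the same route as the paper: the paper's entire proof is the one-line observation that a metric satisfies (g1)--(g3), so Theorem \ref{main thm} applies directly. Your term-by-term verification of the hypotheses is just a more careful rendering of that.

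The one place you genuinely go beyond the paper is the uniqueness discussion, and you are right to do so. The corollary asserts that $F(S)$ contains \emph{exactly} one point, but Theorem \ref{main thm} only yields existence; uniqueness in this framework comes from Theorem \ref{m2}, which needs $S(\Omega)$ to be $\mathcal{R}^s$-connected --- a hypothesis the corollary does not list and the paper's proof never addresses. Your two suggested repairs are the correct options, but note that your ``short direct argument'' ($d(r^*,s^*)=d(Sr^*,Ss^*)\le\alpha\,d(r^*,s^*)$) only disposes of fixed points that are $\mathcal{R}$-comparable: the contraction inequality is assumed only for pairs in $\mathcal{R}$, so two incomparable fixed points cannot be ruled out this way even though $d$ is a genuine metric defined on all of $\Omega\times\Omega$. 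Without the connectedness hypothesis the ``exactly one'' claim is not actually derivable from (i)--(v), so the honest fix is to import the $\mathcal{R}^s$-connectedness assumption explicitly (as the universal-relation corollary effectively does) rather than to treat uniqueness as automatic. In short: your existence argument matches the paper; your instinct that uniqueness needs an extra ingredient is correct and exposes a gap in the corollary as stated.
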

\begin{proof}
    Since $ g $ is a   metric  on $ \Omega $, so $g $ satisfies all the conditions defined in Theorem \ref{main thm}. Hence the proof follows from the  Theorem \ref{main thm}.
\end{proof}

 \begin{remark}
     The above corollary reduces the Theorem  3.1 of \cite{alam2015relation}   as a special case of our main theorem. By choosing the mapping $ g $ to be metric on $\Omega $, the topological space contains a metric space  structure $ ( \Omega, d ) $. Hence the $ g$-$ \mathcal{R}$-contraction condition coincides with the relational contraction condition of Theorem  3.1 of \cite{alam2015relation}. This shows  that Theorem 3.1 of \cite{alam2015relation} is a direct consequence of our generalized result.
 \end{remark}

\begin{corollary}
  Consider a  natural partial ordered relation $\mathcal{R}:=\preceq$ on   $ \Omega $.  Let $ g : \Omega \times \Omega \to \mathbb{R} $  be a    metric    and $ S : \Omega \to \Omega $ be a mapping for which the following conditions:
	\begin{enumerate}[(i)]
		\item $ \Omega $ is $g$-$\preceq$-complete,
		\item $\preceq$ is $ S $-closed,
		\item $ \Omega ( S; \preceq)$ is non-empty,
		\item either $S $ is ``$g$-$\preceq$-continuous" or ``$\preceq$ is $g$-self-closed",
		\item $ S $ is topologically $\preceq$-preserving contraction with respect to $g$. 
	\end{enumerate} 
    
	holds on $ \Omega$. Then $ F ( S ) $ contains exactly one point.  Moreover, for each $r_0\in \Omega ( S; \mathcal{R})$,   $ \{ S^n(r_0) \} $ converges to the fixed point of $S $.  
\end{corollary}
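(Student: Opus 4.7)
The plan is to derive the corollary as a direct consequence of Theorem \ref{main thm} (together with Theorem \ref{m2} for the uniqueness part), using the fact that both the relation and the function appearing in the hypothesis are particular instances of the objects allowed in those theorems. First I would observe that a natural partial order $\preceq$ on $\Omega$ is, by definition, a binary relation on $\Omega$ (a reflexive, antisymmetric, transitive subset of $\Omega\times\Omega$), so setting $\mathcal{R}:=\preceq$ in Theorem \ref{main thm} is legitimate. Moreover, since $g$ is a metric, conditions (g1)–(g3) of Theorem \ref{main thm} hold on all of $\Omega\times\Omega$ — and \emph{a fortiori} on every $\preceq$-related pair — because non-degeneracy, symmetry and the triangle inequality are built into the metric axioms.

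Next I would verify that hypotheses (i)–(v) of the present corollary coincide, under the substitutions $\mathcal{R}\to\preceq$ and $g\to d$, with the corresponding hypotheses (i)–(v) of Theorem \ref{main thm}: $g$-$\preceq$-completeness of $\Omega$, $S$-closedness of $\preceq$, non-emptiness of $\Omega(S;\preceq)$, the dichotomy between $g$-$\preceq$-continuity of $S$ and $g$-self-closedness of $\preceq$, and the topologically $\preceq$-preserving contraction restricted to comparable pairs. Invoking Theorem \ref{main thm} then produces a fixed point $r^*$ of $S$, and, for each $r_0\in\Omega(S;\preceq)$, convergence of the Picard iterates $\{S^n(r_0)\}$ to $r^*$.

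The main (though mild) obstacle is the uniqueness assertion, because two fixed points $r^*,s^*\in F(S)$ need not be $\preceq$-comparable, so the contractive inequality cannot be applied to the pair $(r^*,s^*)$ directly. I would resolve this exactly as in Theorem \ref{m2}: once $S(\Omega)$ is $g$-$\preceq^s$-connected, any two fixed points can be joined by a finite path $p_0=r^*,p_1,\dots,p_t=s^*$ in $\preceq^s$, and the $S$-closedness of $\preceq$ together with the telescoping bound
\[
|g(r^*,s^*)| \le \alpha^n\sum_{i=0}^{t-1}|g(p_i,p_{i+1})| \to 0
\]
forces $r^*=s^*$. This is the one step where the finite-valuedness of the metric $d$ is genuinely used, since it guarantees that every term on the right-hand side is finite and the geometric decay $\alpha^n$ kills the whole sum. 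Combining existence from Theorem \ref{main thm} with the uniqueness argument from Theorem \ref{m2} completes the proof.
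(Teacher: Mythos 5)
Your existence and convergence argument coincides with the paper's: the paper's proof is exactly the observation that a metric satisfies (g1)--(g3) for all triples (hence in particular on $\preceq$-related ones) and that a partial order is a binary relation, after which Theorem \ref{main thm} applies verbatim. That part is fine.

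The issue is the uniqueness claim. You correctly identify that two fixed points need not be $\preceq$-comparable, but you then ``resolve'' this by assuming that $S(\Omega)$ is $g$-$\preceq^{s}$-connected --- a hypothesis that appears in Theorem \ref{m2} but is \emph{not} among the hypotheses (i)--(v) of this corollary. So your uniqueness step does not follow from what is given; you have silently imported an extra assumption. (The paper's own one-line proof is no better here: it invokes only Theorem \ref{main thm}, which yields existence and convergence but not uniqueness, so the ``exactly one point'' conclusion is not actually established by either argument.) Indeed, without a connectedness/comparability hypothesis the uniqueness assertion fails: take $\Omega=\{a,b\}$ with the discrete metric, $\preceq$ the trivial partial order (equality only), and $S$ the identity; all of (i)--(v) hold vacuously or trivially, yet $F(S)=\{a,b\}$. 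The correct repair is to add the hypothesis that $S(\Omega)$ is $g$-$\preceq^{s}$-connected (or a comparability condition on fixed points, as in Ran--Reurings) and then run the telescoping path argument you describe; as written, that argument has no path to telescope along.
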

\begin{proof}
    Since $ g $ is a   metric   on $ \Omega $, so $g $ satisfies the properties defined in Theorem \ref{main thm}. Moreover,    $\mathcal{R}$  being a  natural ordering on $ \Omega $ and hence a binary relation, the conclusion      follows from the  Theorem \ref{main thm}.
\end{proof}

\begin{remark}
    The presence of the metric function $g$ and the natural ordering $ \mathcal{R}$ on $ \Omega $, forms a frame of ordered metric space $ ( \Omega, d, \mathcal{R} ) $. Henceforth   the Corollary  3.3 of Ran and Reurings \cite{ran2004fixed}   can be  presented as a consequence   of our main theorem.   
\end{remark}

%%%%%%%%%%%%%%%%%%%%%%%%%%%%%%%%%%%%%%%%%%%%%%%%%%%%%%%%%%%%%%%
%%%%%%%%%%%%%%%%%%%%%%%%%%%%%%%%%%%%%%%%%%%%%%%%%%%%%%%%%%
%%%%%%%%%%%%%%%%%%%%%%%%%%%%%%%%%%%%%%%%%%%%%%%%%%%%%
%%%%%%%%%%%%%%%%%%%%%%%%%%%%%%%%%%%%%%%%%%%%

\section{Application}
Fractional differential equations   are used as    powerful tools for modeling complex systems characterized by memory and hereditary properties. These equations are  applied across various fields, including physics, biology, engineering and economics to describe phenomena that cannot be captured by classical integer-order models. In this section, we explore the application of fractional differential equations to an economic growth model, highlighting their ability to incorporate non-local effects and memory, which are essential for understanding dynamic systems.

  For this we consider the following equation:
\begin{equation}\label{ode}
		D^\zeta ( f ( t ) ) = h ( t, f ( t ) ), ~~  t \in [ 0, 1 ], ~~ 1 <  \zeta \leq 2
\end{equation}

subject to  the  integral boundary conditions
 \begin{equation}\label{app2}
 	f ( 0 ) = 0, ~ I f ( 1 ) = f' ( 0 ) 
 \end{equation} 
 
 where $ D^\zeta $ represents the Caputo fractional derivative of order $ \zeta $ and defined by
 \begin{equation}\label{app3}
 	D^\zeta ( f ( t ) ) = \dfrac{1}{\Gamma ( i - \zeta )} \int_{0}^{t} ( t- s)^{i-\zeta - 1} f^i ( m ) dm 
 \end{equation} 
 
 such that $ i -1 < \zeta < i, ~ i = [\zeta] + 1$ and $ f : [0, 1 ] \times \mathbb{R} \to [0, \infty ) $ is a continuous function and $ I^\zeta f $ denotes the Reimann-Liouville fractional  integral of order $ \zeta $  of a continuous function $ f : \mathbb{R}^+ \to \mathbb{R} $ given by
 $ I^\zeta f ( t ) = \dfrac{1}{\Gamma ( \zeta )} \int_{0}^{t} ( t- s)^{ \zeta - 1} f  ( m ) dm. $  
The variable $ f(t) $ may represent an economic indicator characterizing the economic health of a region. The nonlinear function $ g(t,f(t) ) $ reflects contributions from various factors, such as innovation, government spending, and the education system, within an economic growth model. The fractional order $ \zeta $ captures the non-local effects and memory inherent in the economic system. The integral boundary conditions $ f(0) = 0 $  and $ I f(1 ) = f'  ( 0 ) $ signify the initiation of economic activity or the observation period and establish a relationship between the accumulated value over the interval $ [0,1] $ and the rate of change of the economic variable at the start of the observation period, for more details see \cite{15,16,17} and citation therein. \\

Let $ \Omega = C [ 0, 1 ] $, set of all continuous function over $ [ 0, 1 ] $. Then the sup metric $ d_\infty ( p_1, p_2 ) = \underset{t \in [ 0, 1 ]}{\sup} | p_1 ( t ) -  p_2 (t ) | $ induces the usual topology on $ \Omega $.

Now we define a binary relation $ \mathcal{R} $ on $ \Omega \times \Omega $ by $ ( p_1, p_2 ) \in \mathcal{R} \iff p_1 ( t ) \leq p_2 ( t ) ~ \forall t \in [ 0, 1 ] $.
Next consider a mapping $   g : \Omega \times \Omega \to \mathbb{R} $ by  $ g( q_1, q_2 ) = \underset{t \in [ 0, I ]}{\sup}  (   q_1 ( t ) - q_2 ( t ) ) ~  \forall q_1, q_2 \in \Omega $.  
Then  
\begin{enumerate}[(i)]
	\item $ g $ satisfies the properties (g1)-(g3) of Theorem \ref{main thm}.  
	\item   $  g $  is continuous on $ \Omega \times \Omega $.
\end{enumerate}
~\\
In this context, we state the following theorem. 

\begin{theorem}
	Consider the non-linear fractional differential equation (\ref{ode}) and suppose the function $h$ satisfies the following conditions:
	\begin{enumerate}[(i)]
		\item $ h $ is a non-decreasing function on the second variable;
		\item for each $ \mu \in [ 0, 1 ] $ and $ ( u, v ) \in \mathcal{R} $, $ h $ satisfies 
		$$ | h ( \mu, u ( \mu ) ) - h ( \mu, v ( \mu ) ) | \leq \frac{\alpha \Gamma ( \alpha + 1 ) }{4} | u ( \mu ) - v( \mu ) | ~ \text{where} ~ \alpha \in ( 0, 1 ).  $$
	\end{enumerate}
Then   the fractional differential equation (\ref{ode}) admits a solution in $C[0,1] $.
\end{theorem}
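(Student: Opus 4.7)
The plan is to recast the boundary-value problem (\ref{ode})--(\ref{app2}) as a fixed-point equation for an integral operator on $\Omega = C[0,1]$ and then apply Theorem \ref{main thm} with the relation $\mathcal{R}$ and function $g$ introduced just above.

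First I would derive the equivalent integral formulation. Applying $I^{\zeta}$ to both sides of (\ref{ode}) and using that $1<\zeta\leq 2$, one obtains $f(t) = c_0 + c_1 t + I^{\zeta} h(\cdot,f(\cdot))(t)$; imposing $f(0)=0$ kills $c_0$, and substituting into the boundary condition $If(1)=f'(0)$ determines $c_1$ as an explicit constant multiple of $\int_0^1 I^{\zeta} h(s,f(s))\,ds$. This produces an operator of the form
\[
  Sf(t) \;=\; \kappa\, t \int_0^1 I^{\zeta} h(s,f(s))\,ds \;+\; I^{\zeta} h(t,f(t)),
\]
which maps $\Omega$ into itself by continuity of $h$, and whose fixed points are exactly the solutions of the boundary-value problem.

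Next I would verify the structural hypotheses of Theorem \ref{main thm}. Completeness of $(\Omega,d_\infty)$ together with the definition of $g$ immediately yields $g$-$\mathcal{R}$-completeness. The monotonicity assumption (i) on $h$, combined with non-negativity of the kernel $(t-s)^{\zeta-1}$, implies that $p_1\leq p_2$ pointwise forces $Sp_1\leq Sp_2$ pointwise, so $\mathcal{R}$ is $S$-closed. Taking $p_0\equiv 0$, the non-negativity of $h$ gives $Sp_0\geq 0=p_0$, hence $(p_0,Sp_0)\in\mathcal{R}$ and $\Omega(S;\mathcal{R})\neq\emptyset$. The pointwise order $\mathcal{R}$ is $g$-self-closed because pointwise inequalities persist under uniform convergence, which is what $g$-convergence reduces to in this setting, so one can take the alternative branch of hypothesis (iv).

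The main technical step is the contraction estimate, and this is where the hard work lies. For $(p_1,p_2)\in\mathcal{R}$, the Lipschitz bound (ii) together with the standard identity $I^{\zeta}\mathbf{1}(t)=t^{\zeta}/\Gamma(\zeta+1)$ yields a pointwise bound on $|Sp_1(t)-Sp_2(t)|$ whose supremum over $[0,1]$ equals $\|p_1-p_2\|_\infty$ multiplied by the Lipschitz constant from (ii) and an explicit factor depending on $\zeta$ and $\kappa$. The delicate point is that, with the precise value of $\kappa$ forced by the integral boundary condition, this compound constant must collapse to exactly $\alpha$; the $\Gamma$-factor and divisor of $4$ appearing in hypothesis (ii) are there precisely to absorb the $\Gamma(\zeta+1)^{-1}$ terms and the $t$-integral of the boundary correction on $[0,1]$. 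Once the inequality $\sup_t |Sp_1(t)-Sp_2(t)|\leq \alpha \|p_1-p_2\|_\infty$ is secured, it translates directly into $|g(Sp_1,Sp_2)|\leq \alpha|g(p_1,p_2)|$, so $S$ is a topologically $\mathcal{R}$-preserving contraction with respect to $g$. Theorem \ref{main thm} then supplies a fixed point of $S$, which is the required solution in $C[0,1]$.
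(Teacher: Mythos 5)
Your proposal follows essentially the same route as the paper: recast the boundary-value problem as a fixed-point equation for the integral operator $Tf(t)=\frac{1}{\Gamma(\zeta)}\int_0^t(t-s)^{\zeta-1}h(s,f(s))\,ds+\frac{2t}{\Gamma(\zeta)}\int_0^1\bigl(\int_0^s(s-m)^{\zeta-1}h(m,f(m))\,dm\bigr)ds$, verify the hypotheses of Theorem \ref{main thm} for the pointwise order $\mathcal{R}$ and the sup-type function $g$, and extract the contraction constant from the Lipschitz bound on $h$ (the factor $\frac{1+2t}{4}\leq\frac{3}{4}<1$ absorbs the $\Gamma(\zeta+1)$ and the boundary correction exactly as you anticipate). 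The only divergences are minor: you invoke the $g$-self-closedness branch of hypothesis (iv) where the paper verifies $g$-$\mathcal{R}$-continuity of $T$, and your choice $p_0\equiv 0$ together with $h\geq 0$ gives a cleaner justification that $\Omega(T;\mathcal{R})\neq\emptyset$ than the paper's rather vague argument.
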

\begin{proof}
	We recall the topological space $ \Omega $,   mapping $g$ and binary relation $ \mathcal{R} $ defined above. Define a mapping $ T : \Omega \to \Omega $ by 
	\begin{equation}\label{int equn}
		T ( u( r ) ) = \dfrac{1}{\Gamma ( \zeta )} \int_{0}^{r} ( r- s)^{ \zeta - 1} h (s, u (s)) ds +   \dfrac{2 r }{\Gamma ( \zeta )} \int_{0}^{1}  \left( \int_{0}^{s} ( s - m)^{ \zeta - 1} h  ( m, u ( m ) ) dm    \right) ds~~ \qquad ~ \forall u \in \Omega.
	\end{equation}
Clearly the solution of (\ref{ode}) is a fixed point of $T$ in $ \Omega $. \\
Now with respect to this $T$, we verify the conditions of the Theorem \ref{main thm}. \\
%% $ \mathcal{R}$ is $T$-closed
Observe  that, for all $u, v \in \Omega $ with $ ( u, v ) \in \mathcal{R} $ and $ r \in [ 0, 1 ] $, 
\begin{align*}
	T ( u ( r ) ) = &  \dfrac{1}{\Gamma ( \zeta )} \int_{0}^{r} ( t- s)^{ \zeta - 1} h (s, u (s)) ds +   \dfrac{2 r }{\Gamma ( \zeta )} \int_{0}^{1}  \left( \int_{0}^{s} ( s - p )^{ \zeta - 1} h  ( p, u ( p ) ) dp    \right) ds \\
	\leq & \dfrac{1}{\Gamma ( \zeta )} \int_{0}^{r} ( t- s)^{ \zeta - 1} h (s, v (s)) ds +   \dfrac{2 r }{\Gamma ( \zeta )} \int_{0}^{1}  \left( \int_{0}^{s} ( s - p)^{ \zeta - 1} h  ( p, v ( p ) ) dp    \right) ds \\
    = &  T ( v ( r ) ).
\end{align*}

Hence $ ( u, v ) \in \mathcal{R} \implies   ( Tu, Tv ) \in \mathcal{R} $ and therefore $ \mathcal{R}$ is $T$-closed. 

%% $ X$ is  $g-\mathcal{R}$-complete
Next consider a $g$-Cauchy sequence $ \{ f_n\} $ in $ \Omega $. Therefore
$$  \underset{m,n \to \infty }{\lim} | g ( f_m, f_n ) | = 0  ~\qquad ~ or ~~
	  \underset{m,n \to \infty }{\lim} \underset{t \in [0, I] }{\sup} ~ |  f_m ( t ) - f_n ( t ) | = 0.  $$

This is the pointwise convergence of $ \{ f_n\} $ in $X$ with respect to the   metric $ d_\infty$. As the metric space $  ( \Omega,   d_\infty ) $ is complete, so $ \{ f_n\} $ converges to some $ f$ in $ ( \Omega,   d_\infty ) $. Hence $  f_n \to f $ (convergence in $ g $-sense). Moreover, if $ \{ f_n\} $ is $ \mathcal{R}$-preserving then we must have $( f_n, f)\in\mathcal{R}$ for each $n\in\mathbb{N}_0$. Thus $ \Omega $ is  $g-\mathcal{R}$-complete. 

%%% $ X (T, $\mathcal{R}$ ) \neq $\emptyset $
As $ C [ 0, 1 ] $ is non-empty, we can take   a function $ f_0 \in C [ 0, 1 ] $. Then   $ T f _0 \in C [ 0, 1 ] $. If there exist some $ r > 0 $ such that $ g ( f_0, T f _0 ) \leq r $ then $ ( f_0, T f _0 ) \in \mathcal{R} $. Otherwise choose $ f_1 \in C [ 0, 1 ] $ ($e.g.$, a function close enough to $f \in  C [ 0, 1 ] $) such that the condition satisfied. The richness of $  C [ 0, 1 ] $ ensures that such $f$ exists.  This way we can conclude $ \Omega (T; \mathcal{R} ) $ must be non-empty.    

%% $T$ is $g-\mathcal{R}$-continuous
Suppose $ \{ f_n\} $ is a $ \mathcal{R} $-preserving sequence in $ \Omega$ which converges to $ f \in \Omega $. Then 
\begin{align*} 
	& \underset{ n \to \infty }{\lim} ~ T ( f_n ) ( t ) \\
    = & \underset{ n \to \infty }{\lim} ~ \dfrac{1}{\Gamma ( \zeta )} \int_{0}^{t} ( t- s)^{ \zeta - 1} h (s, f_n (s)) ds +   \dfrac{2 t }{\Gamma ( \zeta )} \int_{0}^{1}  \left( \int_{0}^{s} ( s - m)^{ \zeta - 1} h  ( m, f_n ( m ) ) dm    \right) ds \\
	 = & \dfrac{1}{\Gamma ( \zeta )} \int_{0}^{t} ( t- s)^{ \zeta - 1} h (s, f (s)) ds +   \dfrac{2 t }{\Gamma ( \zeta )} \int_{0}^{1}  \left( \int_{0}^{s} ( s - m)^{ \zeta - 1} h  ( m, f ( m ) ) dm    \right) ds \\
     = & T ( f ) ( t )  \quad \forall t \in [ 0, 1 ]. 
\end{align*} 

Thus $T$ is $g-\mathcal{R}$-continuous.

%% $T$ is topologically $ \mathcal{R} $-preserving contraction w.r.t $g$ 
Next consider $ u, v \in \Omega $ with $ ( u, v ) \in \mathcal{R} $. Then 
$$ | g ( T u, T v ) | =  \underset{t \in [0, 1 ] }{\sup} ~ | T u ( t ) -  T v ( t ) |   $$
and  for each $ t \in [ 0, 1  ] $
\begin{align*}
	&  | T u ( t ) -  T v ( t ) |  \\
	= & ~  | \dfrac{1}{\Gamma ( \zeta )}\int_{0}^{t} ( t- s)^{ \zeta - 1} \left[  h (s, u (s) ) -  h (s, v (s) )  \right] ds + \\
		& \hskip 80 pt  \dfrac{2 t }{\Gamma ( \zeta )} \int_{0}^{1} \left(  \int_{0}^{s} ( s - p)^{ \zeta - 1} [ h  ( p, u ( p ) ) -  h  ( p, v ( p ) ) ] dp \right) ds   |   \\
		\leq & ~ \dfrac{1}{\Gamma ( \zeta )} \int_{0}^{t} ( t- s)^{ \zeta - 1}  |  h (s, u (s) ) -  h (s, v (s) )  | ds + \\
		&  \hskip 80 pt  \dfrac{2 t }{\Gamma ( \zeta )} \int_{0}^{1} \left(  \int_{0}^{s} ( s - p)^{ \zeta - 1} | h  ( p, u ( p ) ) -  h  ( p, v ( p ) )  | dp   \right) ds \\
		\leq & ~  \dfrac{1}{\Gamma ( \zeta )} \int_{0}^{t} ( t- s)^{ \zeta - 1}   ~ \dfrac{\alpha \Gamma ( \zeta + 1 ) }{4} | u ( s ) - v ( s ) | ds + \\
		&  \hskip 80 pt  \dfrac{2 t }{\Gamma ( \zeta )} \int_{0}^{1} \left(  \int_{0}^{s} ( s - p)^{ \zeta - 1} ~ \dfrac{\alpha  \Gamma ( \zeta + 1 ) }{4}  |   u ( p )   -    v ( p )    | dp   \right) ds \\
		= & ~ \dfrac{ \alpha  \Gamma ( \zeta + 1 ) }{ 4 \Gamma ( \zeta )} \left[  \int_{0}^{t} ( t- s)^{ \zeta - 1}  | u ( s ) - v ( s ) | ds  + 2 t \int_{0}^{1} \left(  \int_{0}^{s} ( s - p )^{ \zeta - 1}    |   u ( p )   -    v ( p )    | dp   \right) ds \right]  \\
		\leq & ~ \dfrac{ \alpha    \zeta   }{ 4  }  \underset{t \in [ 0, 1]}{\sup}  |   u ( t )   -    v ( t ) | \left( \int_{0}^{t} ( t- s)^{ \zeta - 1} ds + 2t  \int_{0}^{1} \left(  \int_{0}^{s} ( s - p )^{ \zeta - 1}  dp   \right) ds \right)  \\
		\leq & ~ \dfrac{ \alpha  \zeta  }{ 4  } ~ | g ( u, v )|  ~ \left( \dfrac{1 + 2 t}{\zeta}   \right)  
		<   ~ \alpha | g ( u, v )|. 
\end{align*}

Henceforth,
$$   | g ( T u, T v ) | \leq \alpha | g ( u, v ) |.  $$ 

Therefore $T$ satisfies the conditions of the  Theorem \ref{main thm} and hence $T$ has a fixed point in $ \Omega $.  Consequently   the fractional differential equation \ref{ode} admits a solution in $ C [ 0, 1 ] $. 
\end{proof}

\subsection{Numerical Illustration of the Application}

\hspace{0.2cm}

To demonstrate the practical implementation and validate the theoretical results established in the previous section, we present a numerical example based on the iterative scheme derived in the application. Specifically, we consider the metric space   $ (\Omega , d_\infty )  $,   binary relation $ \mathcal{R} $ on $ \Omega \times \Omega $  and  $   g $ defined above. In particular we take $ h( t, u(t ) ) =  \frac{1}{16} u (t ) + \sin (t ), ~ \forall t \in [ 0, 1 ] $. Then \\
$\bullet $ $  h $ is     non-decreasing with respect to  $ u $; \\
$\bullet $  for each $ \mu \in [ 0, 1 ] $ and $ ( u, v ) \in \mathcal{R} $, $ h $ satisfies 
$$ | h ( \mu, u ( \mu ) ) - h ( \mu, v ( \mu ) ) | \leq \frac{1}{16} \, | u ( \mu ) - v( \mu ) | <  \frac{\alpha \Gamma ( \alpha + 1 ) }{4} \, | u ( \mu ) - v( \mu ) |  $$

for $ \alpha  = \frac{1}{2} $.  

Next we consider $ \zeta = 0.9 $ and initial guess $ u_0 ( t ) = 0 $. We now compute the operator value $T$ defined in equation (\ref{int equn})   for $ u = u_0, \, u_1, \cdots $. 

For, we define the iterative sequence 
$$ u_{n+1} ( t )= T u_n $$

and we compute $ u_n ( t ) $ for several iterations until the sequence converges.

We numerically approximate the integrals in the operator
\begin{align*}
	u_{n+1} ( t )  = & \dfrac{1}{\Gamma ( 0.9 )} \, \int_{0}^{r} ( r - s)^{  - 0.1} \, \left\{ \frac{1}{16} \, u_n (s )  + \sin (s ) \right\} \,  ds + \\
	& \hskip 50 pt 	\dfrac{2 t }{\Gamma ( 0.9 )} \, \int_{0}^{1}  \left( \int_{0}^{s} ( s - m)^{  - 0.1}  \, \left\{ \frac{1}{16} \, u_n (m ) + \sin (m ) \right\}  \,  dm    \right) ds
\end{align*} 	 

starting from the zero function $ u_0  (t)=0 $. The iteration   is continued for a fixed number of steps  and the sequence \( \{ \| u_{n+1} - u_n \|_\infty \} \) is monitored in MATLAB  to analyze convergence.  

\begin{figure*}
	[ht!]
	\centering
	\includegraphics[width=10cm,height=7cm]{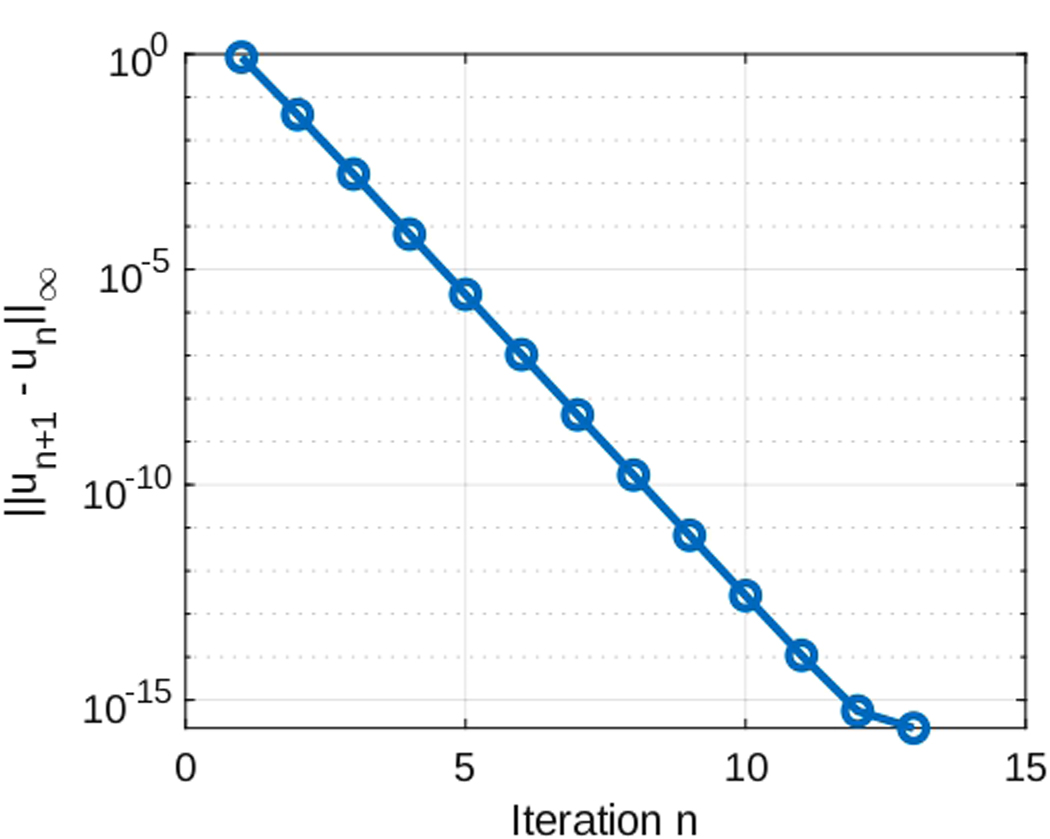}  
	\label{Fig-App} 
	\caption{Convergence profile of the iterative scheme:  $ \| u_{n+1} - u_n \|_\infty $ vs the iteration number $ n $     }
\end{figure*}

\textbf{Convergence Analysis of the Iterative Scheme}:
To validate the theoretical claim   and observe the contractive behavior of the proposed iterative operator, we compute the sequence $ \{ \| u_{n+1} - u_n \|_\infty \}  $ at each iteration using MATLAB. Above Figure \ref{Fig-App} presents the convergence profile, showing the sup norm error $ \{ \| u_{n+1} - u_n \|_\infty \}  $ vs the iteration number $ n $. 
The error decreases from $ \mathcal{O} (1) $  to below machine precision $ \mathcal{O} (10^{-15} ) $  in fewer than 15 iterations. The nearly linear decay in the semilogarithmic scale confirms that the operator satisfies a contractive condition  and ensures rapid convergence to the unique solution. 

This numerical experiment supports the theoretical fixed-point result and illustrates the practical feasibility of applying the proposed iterative method to fractional-type integral equations.

\section{Conclusion}
In this article, we introduced the concept of topologically $ \mathcal{R} $-preserving BCP   on topological spaces that combines the notions of binary relations and continuous functions to extend classical metric fixed-point results. These results provide sufficient conditions for the existence and uniqueness of fixed points.
 This generalization addresses scenarios where existing theorems, such as Theorem \ref{thm of S.Som} of \cite{S.Som} may fail to guarantee the existence of fixed points but by incorporating a suitable binary relation $ \mathcal{R} $, our framework   allows us to grantee the existence of fixed point. We used MATLAB for effective visualization of the  convergence behavior, highlighting how relational and non-metric fixed point frameworks can be demonstrated computationally. An application is provided that highlighted  the applicability of the newly obtained results.  
  
 As a continuation of our work, it directions for exploring further generalizations can be: relaxing the conditions on the binary relation $  \mathcal{R} $  or the function $  g $, and investigating  to the specific problems in optimization, dynamic systems, and applied sciences that remains an intriguing area of research.\\

\noindent {\bf Acknowledgements} All authors are thankful to the learned referees for sparing their valuable time to review this manuscript.\\

\noindent {\bf Author contributions} All authors contributed equally to this manuscript.\\

\noindent {\bf Data availability} The authors confirm that there is no associated data.\\

\noindent {\bf \large Declarations}\\

\noindent {\bf Funding declaration} No funding.\\

\noindent {\bf Competing interests} The authors declare no competing interests.

\end{document}